\DeclareSymbolFont{matha}{OML}{txmi}{m}{it}
\DeclareMathSymbol{\varv}{\mathord}{matha}{118}
\numberwithin{equation}{section}
\theoremstyle{plain}
\newtheorem{thm}{Theorem}[section]
\newtheorem{prop}[thm]{Proposition}
\newtheorem*{cor}{Corollary}
\theoremstyle{definition}
\newtheorem{defn}{Definition}[section]
\theoremstyle{remark}
\newtheorem*{note}{Note}
\newcommand{\interior}[1]{%
  {\kern0pt#1}^{\mathrm{o}}%
}
\begin{document}

\title{Real Projective Geodesics Embedded In Complex Manifolds}
\author[S. Sen]{Swagatam Sen}
\email{swagatam.sen@gmail.com}
\date{July 15, 2020}

\keywords{Complex manifold, Geodesic, Gravity, Lorentz Field, Unified Field}

\subjclass[2010]{Primary 53Z05; Secondary 57N16, 83D05}

\begin{abstract}
Focus of this study is to explore some aspects of mathematical foundations for using complex manifolds as a model for space-time. More specifically, certain equations of motions have been derived as a Projective geodesic on a real manifold embedded within a complex one. To that goal, first the geodesic on complex manifold has been computed using local complex and conjugate coordinates, and then its projection on the real sub-manifold has been studied. The projective geodesic, thus obtained, is shown to have additional terms beyond the usual Christoffel symbols, and hence expands the geodesic to capture effects beyond the mere gravitational ones.
\end{abstract}
\maketitle

\section{Introduction}
Motivations behind studying Complex manifolds within the realms of gravitational physics, or specifically quantum gravity, is a topic that has drawn significant attention since the days of formulation of General Relativity \cite{klein}\cite{Newman}\cite{Trautman}. Einstein himself had progressed to consider a complex hermitian metric to unify gravity and electromagnetism, albeit defined on real 4-dimensional coordinates\cite{Einstein}\cite{Einstein2}. Much before that similar proposals for a complex metric was put forward by Weyl\cite{weyl}\cite{weyl2} and Soh\cite{Soh} among others. As an approach, it doesn't provide adequately strong structure to conceptually integrate the real and imaginary parts of the metric. For example, under a generic continuous transformation both parts of the metric behave quite independently. That line of investigation has persisted over time\cite{straumann2000paulis}\cite{plebanski}\\\cite{Davidson_2012}\cite{bengtsson2018generalized}, albeit in a much reduced capacity. We have a fundamental weakness in these approaches which has restricted its successes, because of the metric being considered as function of exclusively real coordinates, or In other words, the framework used in these studies are that of a real manifold with a complex vector bundle structure around it. It doesn't draw in the stronger structure of complex metric defined on a complex manifold. 

In a way, that same weakness can be attributed to the standard mathematical approach developed later for Quantum Mechanics, wherein measurements are conceptualised as Operators on a complex Hilbert space, which in turn is defined on real phase space coordinates for a quantum system\cite{neumann2018mathematical}. Consequently the underlying mathematical structure for quantum mechanics developed through numerous groundbreaking work\cite{Hooft_2016}\cite{1981PhT....34k..69W}, while tremendously accurate, appears to be incomplete in addressing the longstanding foundational challenges - that of interpretation\cite{copenhagen2009}\cite{sep-qm-manyworlds}\cite{SCHLOSSHAUER2013222}, and that of measurement\cite{1985ZPhyB..59..223J}\cite{zurek2001decoherence}\cite{zeh1995decoherence} among others. 

It was primarily through the development of Twistor theory by Penrose\cite{penrose}, that the idea of a complex underlying space to describe the reality branched out into an area of sustained focus. Essentially within Twistor theory, `real' physical fields are represented as complex objects defined on complex projective space, called Twistor space\cite{1973PhR.....6..241P}. Mathematically, that approach offers a much richer structure to work with using contour integral formalism\cite{Mason_2009}\cite{Arkani_Hamed_2010}, and using connections with String theory\cite{skinner2013twistor}\cite{Witten_2004}. 

In the present work we adopt the view of a spacetime model that's inherently a complex manifold with a real Riemannian sub-manifold describing the measurable outcome. Specifically we look into the behaviour of Geodesics on a complex manifold and the corresponding trajectory it traces as a projection onto the real sub-manifold. 

Throughout the next section we would work with a complex n-manifold $\mathscr{M}$ and its {\it projective} real submanifold $\mathscr{R} \subset \mathscr{M}$. Our objective would be to derive the geodesic on the complex manifold first using the usual variational technique. Then we would study how that geodesic maps to a curve on the {\it projective sub-manifold} $\mathscr{R}$. We would refer this as the {\it projective real geodesic} on $\mathscr{R}$ and analyse how that differs from the true geodesic on $\mathscr{R}$.  

\begin{defn}
For a complex n-manifold $\mathscr{M}$ with a holomorphic atlas $\{(z,U)\}$, $\mathscr{R} \subset \mathscr{M}$ is a {\it projective real sub-manifold} if $\forall \text{ holomorphic charts }(z,U)\text{ on } \mathscr{M}, \exists \text{ a projection } \pi : \mathscr{M} \mapsto \mathscr{R} \text{ and a continuous chart } \chi : \pi(U) \mapsto \mathbb{R}^n  \text{ such that } \chi\circ \pi = Re(z)$ \qed
\end{defn}

This would allow us to write down the local coordinates as $z^\alpha = x^\alpha + it^\alpha$ and $\overline{z}^\beta = x^\beta - it^\beta$ where $x = \chi \circ \pi$ and $t = Im(z)$.

Next we would need to introduce a hermitian positive definite metric $g$ on $\mathscr{M}$ with the usual distance 2-form in local coordinates,
\begin{equation*}
d\tau^2 = g_{\alpha\overline{\beta}}dz^\alpha\overline{dz^\beta}
\end{equation*}

Our end goal in the next section would be to derive a geodesic equation using the complex/conjugate coordinates which is summarised in Theorem \ref{thm:1}. Then in Theorem \ref{thm:2}, we would further prove that this geodesic allows a {\it projective} geodesic on the real coordinates which has the generic structural form as below,

\begin{equation*}
h_{\mu\gamma}D^2x^\mu = \Upsilon^{(1,1)}_{\gamma\alpha\beta} Dx^\alpha Dx^\beta - \Upsilon^{(1,0)}_{\gamma\alpha\beta} Dx^\beta Dt^\alpha - \Upsilon^{(0,0)}_{\gamma\alpha\beta} Dt^\alpha Dt^\beta
\end{equation*}
for some tensor $h$ and symbols $\Upsilon^{(0,0)}, \Upsilon^{(1,0)}, \Upsilon^{(1,1)}$ all of which are functions of the metric.

These theorems would then allow us to consider certain known special cases and illustrate how equations of motions for gravitational and Lorentz Fields can naturally manifest from Theorem \ref{thm:2} under special configurations.

\section{Results}
To achieve the initial goal of deriving the geodesic on the complex manifold using complex/conjugate coordinates, we would follow the usual approach of variational calculus.

Under the metric $g$, the arc length for a given path parametrised by $\sigma \in [0 ,1]$, would be $\tau = \int\limits_0^1 L(\mathbf{z},\frac{d\mathbf{z}}{d\sigma}) d\sigma$, where 
\begin{equation*}
L = \sqrt{g_{\alpha\overline{\beta}}\frac{dz^\alpha}{d\sigma}\frac{d\overline{z^\beta}}{d\sigma}}
\end{equation*}
is the Lagrangian for equation of motion.
\begin{note}
Since $g$ is hermitian, $L$ is real and we can think of minimising the arc length in the usual variational approach.\qed
\end{note}
\begin{note}
the path can be re-parametrized using $\tau$ instead of arbitrary $\sigma$ and the the derivatives can be replaced as $\frac{d}{d\sigma} = L\frac{d}{d\tau}$. Notationally henceforth we would use $D = \frac{d}{d\tau}$. \qed
\end{note}

\begin{prop}\label{prop:1}
Let $Dz^\gamma = \frac{dz^\gamma}{d\tau}$ and $D\overline{z}^\gamma = \frac{d\overline{z}^\gamma}{d\tau}$ be path derivatives of the complex and conjugate coordinates. Also let $\partial_\alpha = \frac{\partial}{\partial z^\alpha}$ and $\partial_{\overline{\beta}} = \frac{\partial}{\partial \overline{z^\beta}}$ be the partial derivatives with respect to complex and conjugate coordinates. Then,
\begin{equation*}
\frac{d}{d\sigma}\big[\frac{\partial L}{\partial (d\overline{z^\gamma}/d\sigma)}\big] = \frac{L}{2}\big[\partial_\beta g_{\alpha\overline{\gamma}} Dz^\alpha Dz^\beta + \partial_{\overline{\beta}} g_{\alpha\overline{\gamma}} Dz^\alpha D\overline{z}^\beta + g_{\alpha\overline{\gamma}} D^2 z^\alpha\big]
\end{equation*}
\qed
\end{prop}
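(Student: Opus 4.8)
The plan is to avoid differentiating the square root directly by working instead with the squared Lagrangian $L^2 = g_{\alpha\overline{\beta}}\,\frac{dz^\alpha}{d\sigma}\frac{d\overline{z}^\beta}{d\sigma}$, and then to exploit the arc-length reparametrization $\frac{d}{d\sigma} = L\,D$ to clear the awkward factor of $1/L$ before the final $\sigma$-differentiation. Throughout I would adopt the standard convention of the complex variational calculus, treating $z^\alpha$, $\overline{z}^\beta$ and their $\sigma$-derivatives as independent variables; in particular the metric coefficients $g_{\alpha\overline{\beta}}$ are functions of position only and hence behave as constants under the velocity-derivative $\frac{\partial}{\partial(d\overline{z}^\gamma/d\sigma)}$.

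First I would differentiate the identity for $L^2$ implicitly with respect to $d\overline{z}^\gamma/d\sigma$. Since $\frac{\partial(d\overline{z}^\beta/d\sigma)}{\partial(d\overline{z}^\gamma/d\sigma)} = \delta^\beta_\gamma$, while the holomorphic velocity $dz^\alpha/d\sigma$ is treated as independent, this yields $2L\,\frac{\partial L}{\partial(d\overline{z}^\gamma/d\sigma)} = g_{\alpha\overline{\gamma}}\,\frac{dz^\alpha}{d\sigma}$, so that $\frac{\partial L}{\partial(d\overline{z}^\gamma/d\sigma)} = \frac{1}{2L}\,g_{\alpha\overline{\gamma}}\,\frac{dz^\alpha}{d\sigma}$. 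Next I would invoke the reparametrization, equivalently $\frac{dz^\alpha}{d\sigma} = L\,Dz^\alpha$; on substituting this the explicit factor of $L$ cancels the $1/L$, leaving the clean momentum $\frac{1}{2}\,g_{\alpha\overline{\gamma}}\,Dz^\alpha$.

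Finally I would apply $\frac{d}{d\sigma} = L\,D$ once more to this expression and expand by the product and chain rules. The product rule splits off the term $g_{\alpha\overline{\gamma}}\,D^2 z^\alpha$, while the chain rule decomposes the total derivative of the metric along the path into its holomorphic and antiholomorphic pieces, $D g_{\alpha\overline{\gamma}} = \partial_\beta g_{\alpha\overline{\gamma}}\,Dz^\beta + \partial_{\overline{\beta}} g_{\alpha\overline{\gamma}}\,D\overline{z}^\beta$. Collecting these three contributions under the common prefactor $L/2$ reproduces the stated right-hand side.

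The step most in need of care is the very first one: the legitimacy of treating $dz^\alpha/d\sigma$ and $d\overline{z}^\gamma/d\sigma$ as independent coordinates on the tangent space, which is precisely what makes the single cross term $g_{\alpha\overline{\gamma}}\,dz^\alpha/d\sigma$ appear cleanly rather than mixing with a conjugate contribution. This rests on the Wirtinger-calculus convention underlying the whole computation, together with the hermiticity of $g$ that guarantees $L$ is real. A secondary point worth stating explicitly is that the reparametrization must be carried out \emph{before} the outer $\sigma$-derivative is applied, since clearing the $1/L$ factor at that stage is exactly what prevents spurious terms involving $DL$ from entering the result.
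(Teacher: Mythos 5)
Your proposal is correct and follows essentially the same route as the paper: compute the conjugate momentum $\frac{\partial L}{\partial(d\overline{z^\gamma}/d\sigma)} = \frac{1}{2L}g_{\alpha\overline{\gamma}}\frac{dz^\alpha}{d\sigma}$, use the reparametrization $\frac{d}{d\sigma}=L\,D$ to cancel the $1/L$ factor before the outer differentiation, then expand with the product rule and the Wirtinger chain rule $Dg_{\alpha\overline{\gamma}} = \partial_\beta g_{\alpha\overline{\gamma}}Dz^\beta + \partial_{\overline{\beta}}g_{\alpha\overline{\gamma}}D\overline{z}^\beta$. The only cosmetic difference is that you obtain the momentum by implicitly differentiating $L^2$, whereas the paper asserts it directly from the definition of $L$; the substance is identical.
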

\begin{proof}
From definition of $L$ we know that,
\begin{equation*}
\frac{\partial L}{\partial (d\overline{z^\gamma}/d\sigma)} = \frac{1}{2L} g_{\alpha\overline{\gamma}} \frac{dz^\alpha}{d\sigma}
\end{equation*}

That, in turn, would allow us to write,
\begin{equation*}
\begin{aligned}
\frac{d}{d\sigma}\frac{\partial L}{\partial (d\overline{z^\gamma}/d\sigma)} &= \frac{d}{d\sigma}\frac{1}{2L} g_{\alpha\overline{\gamma}} \frac{dz^\alpha}{d\sigma}
			= \frac{L}{2} \frac{d}{d\tau} g_{\alpha\overline{\gamma}} \frac{dz^\alpha}{d\tau}\\
			&= \frac{L}{2} \big[\frac{d g_{\alpha\overline{\gamma}}}{d\tau} \frac{dz^\alpha}{d\tau} + g_{\alpha\overline{\gamma}} \frac{d^2 z^\alpha}{d\tau^2}\big]\\
			&= \frac{L}{2} \big[\partial_\beta g_{\alpha\overline{\gamma}} \frac{dz^\alpha}{d\tau}\frac{dz^\beta}{d\tau} +\partial_{\overline{\beta}} g_{\alpha\overline{\gamma}}\frac{dz^\alpha}{d\tau}\frac{d\overline{z^\beta}}{d\tau} + g_{\alpha\overline{\gamma}} \frac{d^2 z^\alpha}{d\tau^2}\big]
\end{aligned}
\end{equation*}

\end{proof}

We can now use this result to derive the geodesic equation in complex and conjugate coordinates.

\begin{thm}\label{thm:1}
Let $\mathscr{M}$ be a complex manifold with a hermitian metric $g$. If $\tau$ is the cumulative length of a given path $\Omega$ on $\mathscr{M}$, then $\Omega$ is a geodesic iff 
\begin{equation*}
g_{\mu\overline{\gamma}} D^2z^\mu = \big(\partial_{\overline{\gamma}} g_{\alpha\overline{\beta}} - \partial_{\overline{\beta}} g_{\alpha\overline{\gamma}}\big)Dz^\alpha D\overline{z^\beta} - \partial_\alpha g_{\beta\overline{\gamma}} Dz^\alpha Dz^\beta
\end{equation*}
\qed
\end{thm}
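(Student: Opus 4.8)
The plan is to obtain the geodesic condition as the Euler--Lagrange equation for the arc-length functional $\tau = \int_0^1 L\, d\sigma$, treating the holomorphic coordinates $z^\gamma$ and their conjugates $\overline{z^\gamma}$ as independent variables in the sense of Wirtinger calculus. Since $L$ is real, varying $\overline{z^\gamma}$ produces the stationarity condition
\begin{equation*}
\frac{d}{d\sigma}\frac{\partial L}{\partial(d\overline{z^\gamma}/d\sigma)} = \frac{\partial L}{\partial \overline{z^\gamma}},
\end{equation*}
and the equation obtained by varying $z^\gamma$ is merely its complex conjugate, so this single family of equations (one for each free index $\gamma$) already characterises the geodesic.

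Proposition \ref{prop:1} supplies the left-hand side directly, so the remaining work is to compute the right-hand side. First I would differentiate $L$ with respect to $\overline{z^\gamma}$, noting that only the metric coefficients carry this dependence, to obtain
\begin{equation*}
\frac{\partial L}{\partial \overline{z^\gamma}} = \frac{1}{2L}\,\partial_{\overline{\gamma}} g_{\alpha\overline{\beta}}\,\frac{dz^\alpha}{d\sigma}\frac{d\overline{z^\beta}}{d\sigma}.
\end{equation*}
Then I would re-parametrise by $\tau$ via $d/d\sigma = L\,d/d\tau$, which turns each $\sigma$-derivative into $L\,Dz^\alpha$ (respectively $L\,D\overline{z^\beta}$) and yields $\dfrac{\partial L}{\partial \overline{z^\gamma}} = \dfrac{L}{2}\,\partial_{\overline{\gamma}} g_{\alpha\overline{\beta}}\,Dz^\alpha D\overline{z^\beta}$.

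Substituting both sides into the Euler--Lagrange equation and cancelling the common factor $L/2$ leaves
\begin{equation*}
g_{\alpha\overline{\gamma}} D^2 z^\alpha = \partial_{\overline{\gamma}} g_{\alpha\overline{\beta}}\,Dz^\alpha D\overline{z^\beta} - \partial_{\overline{\beta}} g_{\alpha\overline{\gamma}}\,Dz^\alpha D\overline{z^\beta} - \partial_\beta g_{\alpha\overline{\gamma}}\,Dz^\alpha Dz^\beta.
\end{equation*}
The first two terms combine into the stated $\big(\partial_{\overline{\gamma}} g_{\alpha\overline{\beta}} - \partial_{\overline{\beta}} g_{\alpha\overline{\gamma}}\big)Dz^\alpha D\overline{z^\beta}$. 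For the last term, because the factor $Dz^\alpha Dz^\beta$ is symmetric under $\alpha\leftrightarrow\beta$, relabelling the dummy indices rewrites $\partial_\beta g_{\alpha\overline{\gamma}}$ as $\partial_\alpha g_{\beta\overline{\gamma}}$, producing exactly the form in the statement (after renaming the free summation index $\alpha$ to $\mu$ on the left).

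I do not expect a genuine analytic obstacle here: once Proposition \ref{prop:1} is available, the computation is essentially mechanical. The only point demanding care is conceptual rather than technical, namely justifying the independent variation of $z^\gamma$ and $\overline{z^\gamma}$ and confirming that the $\overline{z^\gamma}$-equation alone, rather than a coupled pair, captures the full geodesic condition. The final index-symmetrisation is the kind of step that is easy to misstate, so I would verify it explicitly rather than assert it.
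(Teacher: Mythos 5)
Your proposal is correct and follows essentially the same route as the paper: apply the Euler--Lagrange equation in the conjugate coordinates $\overline{z^\gamma}$, compute $\frac{\partial L}{\partial \overline{z^\gamma}} = \frac{L}{2}\,\partial_{\overline{\gamma}} g_{\alpha\overline{\beta}}\, Dz^\alpha D\overline{z^\beta}$, substitute Proposition \ref{prop:1} for the other side, and rearrange with a dummy-index relabelling. Your version merely spells out the re-parametrisation, cancellation, and symmetrisation steps that the paper leaves implicit, along with the justification for why the conjugate-coordinate equation alone suffices.
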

\begin{proof}
We'd first note that,
\begin{equation*}
\frac{\partial L}{\partial \overline{z^\gamma}} = \frac{L}{2} \partial_{\overline{\gamma}} g_{\alpha\overline{\beta}} Dz^\alpha D\overline{z^\beta}
\end{equation*}

Now the result follows from a direct application of the Euler-Lagrange Eqn on conjugate coordinates 
\begin{equation*}
\frac{\partial L}{\partial \overline{z^\gamma}} = \frac{d}{d\sigma}\big[\frac{\partial L}{\partial (d\overline{z^\gamma}/d\sigma)}\big]
\end{equation*}
and replacing the right hand side of the equation using Proposition \ref{prop:1}.
\end{proof}

We can use Theorem \ref{thm:1}, to understand the trace of the complex geodesic projected on the real coordinates and how different that trajectory is from the geodesic on the real sub-manifold. But before that we'd need to introduce a set of objects that would serve the equivalent purpose of Christoffel symbols in the projective geodesic.
\begin{defn}
For a hermitian, positive definite metric $g$ on a Complex manifold $\mathscr{M}$, we define {\it Primary Field} as the combination 
\begin{equation*}
\Phi = \big(\Phi^{\gamma++}_{\alpha\beta}, \Phi^{\gamma+-}_{\alpha\beta}, \Phi^{\gamma-+}_{\alpha\beta}, \Phi^{\gamma--}_{\alpha\beta}\big)
\end{equation*}
where
\begin{itemize}
\item{$\Phi^{\gamma++}_{\alpha\beta} = \frac{1}{2}\big[\partial^x_\gamma g^R_{\alpha\overline{\beta}} - \partial^x_\alpha g^R_{\beta\overline{\gamma}} - \partial^x_\beta g^R_{\alpha\overline{\gamma}}\big]$}
\item{$\Phi^{\gamma+-}_{\alpha\beta} = \frac{1}{2}\big[\partial^x_\gamma g^I_{\alpha\overline{\beta}} - \partial^x_\alpha g^I_{\beta\overline{\gamma}} - \partial^x_\beta g^I_{\alpha\overline{\gamma}}\big]$}
\item{$\Phi^{\gamma-+}_{\alpha\beta} = \frac{1}{2}\big[\partial^t_\gamma g^R_{\alpha\overline{\beta}} - \partial^t_\alpha g^R_{\beta\overline{\gamma}} - \partial^t_\beta g^R_{\alpha\overline{\gamma}}\big]$}
\item{$\Phi^{\gamma--}_{\alpha\beta} = \frac{1}{2}\big[\partial^t_\gamma g^I_{\alpha\overline{\beta}} - \partial^t_\alpha g^I_{\beta\overline{\gamma}} - \partial^t_\beta g^I_{\alpha\overline{\gamma}}\big]$}
\end{itemize}
are the ordinary Christoffel symbols for real and imaginary parts of the metric in real and imaginary coordinates respectively. \qed
\end{defn}

\begin{note}
If $g$ is real symmetric, then $\Phi^{\gamma+-}_{\alpha\beta} = \Phi^{\gamma--}_{\alpha\beta} = 0$
\end{note}
\begin{note}
If $g$ is independent of $t$, then $\Phi^{\gamma-+}_{\alpha\beta} = \Phi^{\gamma--}_{\alpha\beta} = 0$
\end{note}
\begin{note}
$\Phi^{\gamma+-}_{\alpha\alpha} = \Phi^{\gamma--}_{\alpha\alpha} = 0$
\end{note}

\begin{prop}\label{prop:2}
For a hermitian metric $g=g^R+ig^I$ with {\it Primary Field} $\Phi$, we can write,
\begin{enumerate}
\item{$\partial_{\overline{\gamma}} g_{\alpha\overline{\beta}} - \partial_{\overline{\beta}} g_{\alpha\overline{\gamma}} - \partial_{\alpha} g_{\beta\overline{\gamma}} = \Phi^{\gamma++}_{\alpha\overline{\beta}} + i\Phi^{\gamma+-}_{\alpha\overline{\beta}} +\frac{i}{2}\partial^t_\gamma g^R_{\alpha\overline{\beta}}$}
\item{$\partial_{\overline{\gamma}} g_{\alpha\overline{\beta}} - \partial_{\overline{\beta}} g_{\alpha\overline{\gamma}} + \partial_{\alpha} g_{\beta\overline{\gamma}} = \frac{1}{2}\partial^x_\gamma g^R_{\alpha\overline{\beta}} - \Phi^{\gamma;--}_{\alpha\overline{\beta}} + i\Phi^{\gamma-+}_{\alpha\overline{\beta}}$}
\end{enumerate}
\end{prop}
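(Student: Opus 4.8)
The plan is to regard (1) and (2) as purely algebraic identities, proved by rewriting the holomorphic and antiholomorphic derivatives in the real coordinates $(x,t)$ and splitting $g$ into $g^R$ and $g^I$. First I would record the Wirtinger expansions $\partial_\alpha=\tfrac12(\partial^x_\alpha-i\partial^t_\alpha)$ and $\partial_{\overline\beta}=\tfrac12(\partial^x_\beta+i\partial^t_\beta)$ coming from $z^\alpha=x^\alpha+it^\alpha$, together with the hermiticity relations $g^R_{\alpha\overline\beta}=g^R_{\beta\overline\alpha}$ and $g^I_{\alpha\overline\beta}=-g^I_{\beta\overline\alpha}$ that are equivalent to $g_{\alpha\overline\beta}=\overline{g_{\beta\overline\alpha}}$. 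These are the only structural inputs; everything else is bookkeeping.

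For (1), I would substitute $g=g^R+ig^I$ and the Wirtinger expressions into each of $\partial_{\overline\gamma}g_{\alpha\overline\beta}$, $\partial_{\overline\beta}g_{\alpha\overline\gamma}$ and $\partial_\alpha g_{\beta\overline\gamma}$. Each of the three produces an $x$- and a $t$-derivative of each of $g^R$ and $g^I$, so the left-hand side resolves into twelve real contributions, which I would sort by real versus imaginary part. The decisive step is recognising that the three terms carrying $x$-derivatives of $g^R$ are exactly $\partial^x_\gamma g^R_{\alpha\overline\beta}-\partial^x_\alpha g^R_{\beta\overline\gamma}-\partial^x_\beta g^R_{\alpha\overline\gamma}=2\Phi^{\gamma++}_{\alpha\beta}$, and the three $x$-derivatives of $g^I$ collapse identically to $2\Phi^{\gamma+-}_{\alpha\beta}$; these reproduce the first two terms on the right directly from the definition of the Primary Field.

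I would then repeat the computation for (2). Because (2) differs from (1) only in the sign of $\partial_\alpha g_{\beta\overline\gamma}$, the roles interchange: now it is the three $t$-derivatives of $g^I$ that close up into $-\Phi^{\gamma--}_{\alpha\beta}$ and the three $t$-derivatives of $g^R$ that close up into $i\Phi^{\gamma-+}_{\alpha\beta}$, while the surviving piece that is undifferentiated in the pair $(\alpha,\beta)$ is the single diagonal term $\tfrac12\partial^x_\gamma g^R_{\alpha\overline\beta}$.

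The hard part will be the residual single-derivative terms that are \emph{not} swallowed by a Christoffel combination: the leftover $t$-derivatives in (1) and the leftover $x$-derivatives in (2). I expect these to organise into the two lone terms $\tfrac{i}{2}\partial^t_\gamma g^R_{\alpha\overline\beta}$ and $\tfrac12\partial^x_\gamma g^R_{\alpha\overline\beta}$ together with a remainder that is antisymmetric in $(\alpha,\beta)$, and the delicate point is to account for that remainder. Here I would lean on the hermiticity symmetries and on the coincident-index identities of the Notes ($\Phi^{\gamma+-}_{\alpha\alpha}=\Phi^{\gamma--}_{\alpha\alpha}=0$), keeping in mind that in the application of Theorem \ref{thm:1} these symbols are contracted against the velocity products $Dz^\alpha D\overline z^\beta$ and $Dz^\alpha Dz^\beta$. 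Confirming that this reduction matches the claimed right-hand sides exactly, with the correct factors of $\tfrac12$ and $i$, is where the real care is needed and where I would focus the verification.
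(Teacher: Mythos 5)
Your computation is the paper's computation: Wirtinger expansion of the three derivative terms, separation into real and imaginary parts, and recognition of the Christoffel blocks --- your identification of $2\Phi^{\gamma++}_{\alpha\beta}$ and $2\Phi^{\gamma+-}_{\alpha\beta}$ in (1), and of $-\Phi^{\gamma--}_{\alpha\beta}$, $\Phi^{\gamma-+}_{\alpha\beta}$ in (2), is exactly what the paper does. But the ``delicate point'' you deferred is not a verification detail; it is a genuine obstruction, and two of the three tools you propose for it (hermiticity and the coincident-index notes) cannot close it. Carrying your expansion through, the left-hand side of (1) minus the claimed right-hand side equals
\begin{equation*}
\frac{1}{2}\big(\partial^t_\beta g^I_{\alpha\overline{\gamma}} - \partial^t_\alpha g^I_{\beta\overline{\gamma}} - \partial^t_\gamma g^I_{\alpha\overline{\beta}}\big) + \frac{i}{2}\big(\partial^t_\alpha g^R_{\beta\overline{\gamma}} - \partial^t_\beta g^R_{\alpha\overline{\gamma}}\big),
\end{equation*}
and (2) leaves an analogous residue with $x$-derivatives in place of $t$-derivatives. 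Hermiticity shows these residues are antisymmetric under $\alpha\leftrightarrow\beta$, but they do not vanish: already for $g^I=0$ with $g^R$ depending only on $t^1$, the residue in (1) at $\alpha=1$, $\beta\neq 1$ is $\frac{i}{2}\,\partial^t_1 g^R_{\beta\overline{\gamma}}$, which is generically nonzero. So Proposition \ref{prop:2}, read as a pointwise identity of symbols, is false, and no symmetry of $g$ rescues it.

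The only correct way to finish is the remark you made in passing: in the proof of Theorem \ref{thm:2}, identity (1) is contracted against $Dx^\alpha Dx^\beta$ and identity (2) against $Dt^\alpha Dt^\beta$, both symmetric in $(\alpha,\beta)$, so an $(\alpha,\beta)$-antisymmetric residue contributes nothing. The proposition must therefore be restated as an equality modulo terms antisymmetric in $(\alpha,\beta)$ (equivalently, an equality of the $(\alpha,\beta)$-symmetrized sides); with that weakening, your computation proves it. You should also know that the paper's own proof contains exactly the gap you flagged: it expands $\partial_{\overline{\gamma}} g_{\alpha\overline{\beta}} - \partial_{\overline{\beta}} g_{\alpha\overline{\gamma}}$ correctly and then, in the very next display, silently discards the antisymmetric leftovers. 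So your instinct located a real defect shared by the source; the fix is symmetrization under the contraction, not further symmetry bookkeeping.
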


\begin{proof}
As we know,,
\begin{equation*}
\begin{aligned}
\partial_{\overline{\gamma}} g_{\alpha\overline{\beta}} - \partial_{\overline{\beta}} g_{\alpha\overline{\gamma}} = \frac{1}{2}\big[\partial^x_\gamma g^R_{\alpha\overline{\beta}} - \partial^t_\gamma g^I_{\alpha\overline{\beta}} &- \partial^x_\beta g^R_{\alpha\overline{\gamma}} + \partial^t_\beta g^I_{\alpha\overline{\gamma}}\big] \\&+\frac{i}{2}\big[\partial^x_\gamma g^I_{\alpha\overline{\beta}} + \partial^t_\gamma g^R_{\alpha\overline{\beta}} - \partial^x_\beta g^I_{\alpha\overline{\gamma}} - \partial^t_\beta g^R_{\alpha\overline{\gamma}}\big]
\end{aligned}
\end{equation*}

we can write,
\begin{equation*}
\begin{aligned}
\partial_{\overline{\gamma}} g_{\alpha\overline{\beta}} - \partial_{\overline{\beta}} g_{\alpha\overline{\gamma}} - \partial_{\alpha} g_{\beta\overline{\gamma}} &= \frac{1}{2}\big[\partial^x_\gamma g^R_{\alpha\overline{\beta}} - \partial^x_\beta g^R_{\alpha\overline{\gamma}} -\partial^x_\alpha g^R_{\beta\overline{\gamma}} \big] \\&+ \frac{i}{2} \big[ \partial^x_\gamma g^I_{\alpha\overline{\beta}} -\partial^x_\beta g^I_{\alpha\overline{\gamma}} - \partial^x_\alpha g^I_{\beta\overline{\gamma}}\big] +\frac{i}{2}\partial^t_\gamma g^R_{\alpha\overline{\beta}}\\
&=\Phi^{\gamma++}_{\alpha\overline{\beta}} + i\Phi^{\gamma+-}_{\alpha\overline{\beta}} +\frac{i}{2} \partial^t_\gamma g^R_{\alpha\overline{\beta}}
\end{aligned}
\end{equation*}

Similarly,
\begin{equation*}
\begin{aligned}
\partial_{\overline{\gamma}} g_{\alpha\overline{\beta}} - \partial_{\overline{\beta}} g_{\alpha\overline{\gamma}} + \partial_{\alpha} g_{\beta\overline{\gamma}} &= \frac{1}{2}\partial^x_\gamma g^R_{\alpha\overline{\beta}} - \frac{1}{2}\big[\partial^t_\gamma g^I_{\alpha\overline{\beta}} - \partial^t_\beta g^I_{\alpha\overline{\gamma}} -\partial^t_\alpha g^I_{\beta\overline{\gamma}}\big] \\
&+ \frac{i}{2} \big[ \partial^t_\gamma g^R_{\alpha\overline{\beta}} -\partial^t_\beta g^R_{\alpha\overline{\gamma}} - \partial^t_\alpha g^R_{\beta\overline{\gamma}}\big] \\
&=-\Phi^{\gamma--}_{\alpha\overline{\beta}} + i\Phi^{\gamma-+}_{\alpha\overline{\beta}} +\frac{1}{2} \partial^x_\gamma g^R_{\alpha\overline{\beta}}
\end{aligned}
\end{equation*}

\end{proof}

\begin{defn}
For a hermitian, positive definite metric $g$ on a Complex manifold $\mathscr{M}$, we define {\it Secondary Field} as 
\begin{equation*}
F=\big(F^{x}_{\alpha\gamma\beta},F^{t}_{\alpha\gamma\beta}\big)
\end{equation*}

where

\begin{itemize}
\item{$F^{x}_{\alpha\gamma\beta} = \partial^x_\gamma g^I_{\alpha\overline{\beta}} - \partial^x_\beta g^I_{\alpha\overline{\gamma}}$}
\item{$F^{t}_{\alpha\gamma\beta} = \partial^t_\gamma g^I_{\alpha\overline{\beta}} - \partial^t_\beta g^I_{\alpha\overline{\gamma}}$}
\end{itemize} \qed
\end{defn}

\begin{note}
{\it Secondary Field} symbols are anti-symmetric in $\beta, \gamma$
\end{note}
\begin{note}
If $g$ is real symmetric, then $F^{x}_{\alpha\gamma\beta} = F^{t}_{\alpha\gamma\beta} = 0$
\end{note}
\begin{note}
{\it Secondary Field} symbols are covariant 2-tensors in $\beta, \gamma$ indices under holomorphic coordinate changes.
\end{note}

\begin{prop}\label{prop:3}
For a hermitian metric $g=g^R+ig^I$ with {\it Secondary field} $F$, we can write,
\begin{equation*}
2\partial_{\overline{\gamma}} g^I_{\alpha\overline{\beta}} + i\partial^x_\beta g_{\alpha\overline{\gamma}} + \partial^t_\alpha g_{\beta\overline{\gamma}} = F^x_{\alpha\gamma\beta} + \partial^t_\alpha g^I_{\beta\overline{\gamma}} - iF^t_{\beta\gamma\alpha} + i\partial^x_\beta g^R_{\alpha\overline{\gamma}}
\end{equation*}
\end{prop}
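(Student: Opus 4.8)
The plan is to translate every term into the real coordinate derivatives $\partial^x$ and $\partial^t$, split both sides into real and imaginary parts, and then recover the antisymmetric combinations that define $F^x$ and $F^t$. First I would expand the single Wirtinger derivative on the left via $\partial_{\overline{\gamma}} = \frac{1}{2}(\partial^x_\gamma + i\partial^t_\gamma)$, which follows from $z^\alpha = x^\alpha + it^\alpha$, giving $2\partial_{\overline{\gamma}} g^I_{\alpha\overline{\beta}} = \partial^x_\gamma g^I_{\alpha\overline{\beta}} + i\partial^t_\gamma g^I_{\alpha\overline{\beta}}$. Substituting $g = g^R + ig^I$ into the remaining left-hand terms $i\partial^x_\beta g_{\alpha\overline{\gamma}}$ and $\partial^t_\alpha g_{\beta\overline{\gamma}}$ and collecting by real and imaginary parts yields a fully real-coordinate expression for the left side.

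I would then carry out the analogous expansion on the right: $F^x_{\alpha\gamma\beta}$ and $\partial^t_\alpha g^I_{\beta\overline{\gamma}}$ are already real, $i\partial^x_\beta g^R_{\alpha\overline{\gamma}}$ is imaginary, and $-iF^t_{\beta\gamma\alpha}$ expands through its definition $F^t_{\beta\gamma\alpha} = \partial^t_\gamma g^I_{\beta\overline{\alpha}} - \partial^t_\alpha g^I_{\beta\overline{\gamma}}$. Matching the real and imaginary parts separately reduces the proposition to a pair of scalar identities between $x$- and $t$-derivatives of $g^R$ and $g^I$.

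These scalar identities do not close term-by-term, and reconciling them is where I expect the real work to lie. The essential input is the hermitian symmetry $g_{\alpha\overline{\beta}} = \overline{g_{\beta\overline{\alpha}}}$, which yields $g^R_{\alpha\overline{\beta}} = g^R_{\beta\overline{\alpha}}$ together with the antisymmetry $g^I_{\alpha\overline{\beta}} = -g^I_{\beta\overline{\alpha}}$. The antisymmetry is precisely what turns the stray $\partial^t_\gamma g^I_{\beta\overline{\alpha}}$ arising from $F^t_{\beta\gamma\alpha}$ into $-\partial^t_\gamma g^I_{\alpha\overline{\beta}}$, so that it matches the $\partial^t_\gamma g^I_{\alpha\overline{\beta}}$ generated by the Wirtinger derivative on the left; without invoking hermiticity the two sides cannot be made to agree. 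The final thing I would verify is the index bookkeeping in the permutation $\alpha\gamma\beta \mapsto \beta\gamma\alpha$ of the $F$-symbols, since the barred-slot sign flips coming from hermiticity are the only place a sign error is likely to slip in, and it is worth confirming that the purely real pieces (the $\partial^t_\alpha$ derivatives of $g^R$ versus $g^I$) balance exactly as claimed.
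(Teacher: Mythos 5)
Your strategy coincides with the paper's own: expand $\partial_{\overline{\gamma}} = \tfrac{1}{2}(\partial^x_\gamma + i\partial^t_\gamma)$, substitute $g = g^R + ig^I$, separate real and imaginary parts, and recognize the antisymmetric combinations as $F$-symbols. Your explicit appeal to hermiticity is the right call, and on this point you are more careful than the paper: matching the imaginary part of the left side, $\partial^t_\gamma g^I_{\alpha\overline{\beta}} + \partial^t_\alpha g^I_{\beta\overline{\gamma}} + \partial^x_\beta g^R_{\alpha\overline{\gamma}}$, against $-iF^t_{\beta\gamma\alpha} + i\partial^x_\beta g^R_{\alpha\overline{\gamma}}$ genuinely requires the antisymmetry $g^I_{\beta\overline{\alpha}} = -g^I_{\alpha\overline{\beta}}$, while the paper's proof instead passes through the intermediate grouping $-i\bigl(\partial^t_\gamma g^I_{\alpha\overline{\beta}} - \partial^t_\alpha g^I_{\beta\overline{\gamma}}\bigr)$, which neither follows from its preceding line nor equals $-iF^t_{\beta\gamma\alpha}$; only the combination of those two slips reproduces the correct imaginary part.

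However, the check you defer at the end is not a formality, and completing it shows that the proposition as printed is false. The real part of the left side is
\begin{equation*}
F^x_{\alpha\gamma\beta} + \partial^t_\alpha g^R_{\beta\overline{\gamma}},
\end{equation*}
because the only real contribution of $\partial^t_\alpha g_{\beta\overline{\gamma}}$ is the derivative of $g^R$, whereas the stated right side carries $\partial^t_\alpha g^I_{\beta\overline{\gamma}}$; no symmetry of a hermitian metric identifies these two terms. The identity your argument actually proves is
\begin{equation*}
2\partial_{\overline{\gamma}} g^I_{\alpha\overline{\beta}} + i\partial^x_\beta g_{\alpha\overline{\gamma}} + \partial^t_\alpha g_{\beta\overline{\gamma}} = F^x_{\alpha\gamma\beta} + \partial^t_\alpha g^R_{\beta\overline{\gamma}} - iF^t_{\beta\gamma\alpha} + i\partial^x_\beta g^R_{\alpha\overline{\gamma}},
\end{equation*}
with $g^R$ rather than $g^I$ in the second term. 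This is evidently the intended statement: the coefficient $\Upsilon^{(1,0)}_{\gamma\alpha\beta}$ in Theorem \ref{thm:2} contains $\partial^t_\alpha g^R_{\beta\overline{\gamma}}$, consistent with this corrected identity (though the same $g^I$-for-$g^R$ slip reappears inside that theorem's proof). So: same route as the paper and a sound plan, but carry out your final verification --- it lands exactly on the paper's misprint.
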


\begin{proof}
Clearly, 
\begin{equation*}
\begin{aligned}
2\partial_{\overline{\gamma}} g^I_{\alpha\overline{\beta}} &+ i\partial^x_\beta g_{\alpha\overline{\gamma}} + \partial^t_\alpha g_{\beta\overline{\gamma}} \\
&= \partial^x_\gamma g^I_{\alpha\overline{\beta}} + i\partial^t_\gamma g^I_{\alpha\overline{\beta}} + i\partial^x_\beta g^R_{\alpha\overline{\beta}} - \partial^x_\beta g^I_{\alpha\overline{\gamma}} + \partial^t_\alpha g^R_{\beta\overline{\gamma}}  + i\partial^t_\alpha g^I_{\beta\overline{\gamma}} \\
&= (\partial^x_\gamma g^I_{\alpha\overline{\beta}} - \partial^x_\beta g^I_{\alpha\overline{\gamma}}) + \partial^t_\alpha g^I_{\beta\overline{\gamma}} -i(\partial^t_\gamma g^I_{\alpha\overline{\beta}} - \partial^t_\alpha g^I_{\beta\overline{\gamma}}) + i\partial^x_\beta g^R_{\alpha\overline{\gamma}} \\
&=F^x_{\alpha\gamma\beta} + \partial^t_\alpha g^I_{\beta\overline{\gamma}} - iF^t_{\beta\gamma\alpha} + i\partial^x_\beta g^R_{\alpha\overline{\gamma}}
\end{aligned}
\end{equation*}
\end{proof}
\begin{defn}
For a hermitian metric $g$ on a Complex manifold $\mathscr{M}$ with $g = g^R + ig^I$, we define Link tensor as $\epsilon^\eta_\gamma = g^I_{\nu\overline{\gamma}}g^{R;\nu\overline{\eta}}$ \qed
\end{defn}

\begin{note}
If $g$ is real symmetric then $\epsilon^\eta_\gamma = 0$
\end{note}

\begin{thm}\label{thm:2}
Let $\mathscr{R}$ be a real projective manifold embedded within a complex manifold $\mathscr{M}$. Let $g$ be a hermitian, positive definite metric on $\mathscr{M}$ with $g = g^R+ig^I$. Additionally let $\Phi, F$ be the {\it primary and secondary fields} respectively and let $\epsilon$ be the link symbol. Then $\mathscr{R}$ is endowed with a {\it real projective geodesic} described by
\begin{equation*}
\big(g^R_{\mu\overline{\gamma}} + \epsilon^\eta_\gamma g^I_{\mu\overline{\eta}}\big) D^2x^\mu = \Upsilon^{(1,1)}_{\gamma\alpha\beta} Dx^\alpha Dx^\beta - \Upsilon^{(1,0)}_{\gamma\alpha\beta} Dx^\beta Dt^\alpha - \Upsilon^{(0,0)}_{\gamma\alpha\beta} Dt^\alpha Dt^\beta
\end{equation*}
where 
\begin{itemize}
\item{$\Upsilon^{(1,1)}_{\gamma\alpha\beta} = \Phi^{\gamma++}_{\alpha\overline{\beta}} + \epsilon^\eta_\gamma \Phi^{\gamma-+}_{\alpha\overline{\beta}} + \frac{1}{2}\epsilon^\eta_\gamma \partial^t_\eta g^R_{\alpha\overline{\beta}}$}
\item{$\Upsilon^{(1,0)}_{\gamma\alpha\beta} = F^{x}_{\alpha\gamma\beta} - \epsilon^\eta_\gamma F^{t}_{\beta\eta\alpha} +\partial^t_\alpha g^R_{\beta\overline{\gamma}} + \epsilon^\eta_\gamma \partial^x_\beta g^R_{\alpha\overline{\eta}}$}
\item{$\Upsilon^{(0,0)}_{\gamma\alpha\beta} = \Phi^{\gamma--}_{\alpha\overline{\beta}} - \frac{1}{2} \partial^x_\gamma g^R_{\alpha\overline{\beta}} - \epsilon^\eta_\gamma \Phi^{\gamma-+}_{\gamma\alpha\overline{\beta}}$}
\end{itemize}
\end{thm}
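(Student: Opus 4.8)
The plan is to descend from the complex geodesic of Theorem \ref{thm:1} to $\mathscr{R}$ by splitting every object into real and imaginary parts under $z^\alpha = x^\alpha + it^\alpha$. First I would write $D^2z^\mu = D^2x^\mu + iD^2t^\mu$, $Dz^\alpha = Dx^\alpha + iDt^\alpha$, $D\overline{z^\beta} = Dx^\beta - iDt^\beta$, and convert the Wirtinger derivatives through $\partial_\alpha = \frac{1}{2}(\partial^x_\alpha - i\partial^t_\alpha)$ and $\partial_{\overline\beta} = \frac{1}{2}(\partial^x_\beta + i\partial^t_\beta)$. Expanding $g_{\mu\overline\gamma}D^2z^\mu = (g^R_{\mu\overline\gamma}+ig^I_{\mu\overline\gamma})(D^2x^\mu+iD^2t^\mu)$ and separating yields two real tensor equations indexed by $\gamma$: a real part whose second-order piece is $g^R_{\mu\overline\gamma}D^2x^\mu - g^I_{\mu\overline\gamma}D^2t^\mu$, and an imaginary part whose second-order piece is $g^I_{\mu\overline\gamma}D^2x^\mu + g^R_{\mu\overline\gamma}D^2t^\mu$.

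The first key step is to isolate the $x$-dynamics by eliminating $D^2t^\mu$. I would form the combination (real part at index $\gamma$) $+\ \epsilon^\eta_\gamma\times$ (imaginary part at index $\eta$). Since $\epsilon^\eta_\gamma = g^I_{\nu\overline\gamma}g^{R;\nu\overline\eta}$ and $g^{R;\nu\overline\eta}g^R_{\mu\overline\eta}=\delta^\nu_\mu$, the coefficient of $D^2t^\mu$ becomes $-g^I_{\mu\overline\gamma} + \epsilon^\eta_\gamma g^R_{\mu\overline\eta} = -g^I_{\mu\overline\gamma}+g^I_{\mu\overline\gamma}=0$, while the coefficient of $D^2x^\mu$ becomes exactly $g^R_{\mu\overline\gamma}+\epsilon^\eta_\gamma g^I_{\mu\overline\eta}$. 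This recovers the left-hand side of the theorem, showing that the link tensor is precisely the device projecting the complex second derivative onto the real one.

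The second key step is to reorganise the right-hand side. I would expand the two bilinears of Theorem \ref{thm:1} as $Dz^\alpha D\overline{z^\beta} = Dx^\alpha Dx^\beta + Dt^\alpha Dt^\beta + i(Dt^\alpha Dx^\beta - Dx^\alpha Dt^\beta)$ and $Dz^\alpha Dz^\beta = Dx^\alpha Dx^\beta - Dt^\alpha Dt^\beta + i(Dx^\alpha Dt^\beta + Dt^\alpha Dx^\beta)$, then collect the complex coefficients of the three independent real bilinears $Dx^\alpha Dx^\beta$, $Dx^\beta Dt^\alpha$, $Dt^\alpha Dt^\beta$, freely symmetrising in $\alpha\leftrightarrow\beta$ the coefficients of the symmetric forms $Dx^\alpha Dx^\beta$ and $Dt^\alpha Dt^\beta$. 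The coefficients that emerge are exactly the combinations $\partial_{\overline\gamma}g_{\alpha\overline\beta}-\partial_{\overline\beta}g_{\alpha\overline\gamma}\pm\partial_\alpha g_{\beta\overline\gamma}$ governed by Proposition \ref{prop:2}, together with the mixed combination $2\partial_{\overline\gamma}g^I_{\alpha\overline\beta}+i\partial^x_\beta g_{\alpha\overline\gamma}+\partial^t_\alpha g_{\beta\overline\gamma}$ governed by Proposition \ref{prop:3}. Substituting these rewrites the holomorphic and mixed pieces in terms of the primary field $\Phi$ (feeding the $Dx\,Dx$ and $Dt\,Dt$ channels) and the secondary field $F$ (feeding the cross $Dx\,Dt$ channel); applying the same real-plus-$\epsilon$-imaginary combination to these coefficients then produces $\Upsilon^{(1,1)}$, $\Upsilon^{(0,0)}$, and $\Upsilon^{(1,0)}$ respectively.

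I expect the main obstacle to lie in the index and sign bookkeeping of this last step rather than in any conceptual difficulty. For each of the three bilinear channels one must track how the real part and the $\epsilon^\eta_\gamma$-weighted imaginary part of the coefficient superpose, and ensure the Wirtinger factors of $\frac{1}{2}$ land so that the primary- and secondary-field definitions are matched with the contracted index $\eta$ correctly placed. The hermitian symmetries $g^R_{\alpha\overline\beta}=g^R_{\beta\overline\alpha}$ and $g^I_{\alpha\overline\beta}=-g^I_{\beta\overline\alpha}$, together with the antisymmetry of $F$ in its last two indices, are what collapse the surplus derivative terms into the compact $\Upsilon$ symbols, and keeping these straight through the symmetrisation is where the computation is most error-prone.
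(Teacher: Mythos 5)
Your proposal is correct and follows essentially the same route as the paper: expand the complex geodesic of Theorem \ref{thm:1} into real and imaginary parts, rewrite the bilinear coefficients via Propositions \ref{prop:2} and \ref{prop:3}, and then take the real identity at index $\gamma$ plus $\epsilon^\eta_\gamma$ times the imaginary identity at index $\eta$ so that the $D^2t^\mu$ terms cancel. In fact you make explicit the cancellation mechanism ($-g^I_{\mu\overline{\gamma}} + \epsilon^\eta_\gamma g^R_{\mu\overline{\eta}} = 0$) that the paper leaves implicit in its final sentence.
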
 \qed

\begin{proof}

First of all we can write,
\begin{equation*}
\begin{aligned}
\big[\partial_{\overline{\gamma}} &g_{\alpha\overline{\beta}} - \partial_{\overline{\beta}} g_{\alpha\overline{\gamma}}\big] Dz^\alpha D\overline{z}^\beta \\&=  \big[\partial_{\overline{\gamma}} g_{\alpha\overline{\beta}} - \partial_{\overline{\beta}} g_{\alpha\overline{\gamma}}\big] \big[Dx^\alpha Dx^\beta + Dt^\alpha Dt^\beta +i\big(Dt^\alpha Dx^\beta - Dt^\beta Dx^\alpha\big)\big]\\
&=\big[\partial_{\overline{\gamma}} g_{\alpha\overline{\beta}} - \partial_{\overline{\beta}} g_{\alpha\overline{\gamma}}\big] \big[Dx^\alpha Dx^\beta + Dt^\alpha Dt^\beta\big] 
+i\big[2\partial_{\overline{\gamma}} g^I_{\alpha\overline{\beta}} - \partial_{\overline{\beta}} g_{\alpha\overline{\gamma}} + \partial_{\overline{\alpha}} g_{\beta\overline{\gamma}}\big] Dt^\alpha Dx^\beta
\end{aligned}
\end{equation*}

Also,
\begin{equation*}
\begin{aligned}
\partial_{\beta} g_{\alpha\overline{\gamma}} Dz^\alpha Dz^\beta &=   \partial_{\beta} g_{\alpha\overline{\gamma}} \big[Dx^\alpha Dx^\beta - Dt^\alpha Dt^\beta +i\big(Dt^\alpha Dx^\beta + Dt^\beta Dx^\alpha\big)\big]\\
&= \partial_{\alpha} g_{\beta\overline{\gamma}} \big[Dx^\alpha Dx^\beta - Dt^\alpha Dt^\beta\big] +i\big[\partial_{\beta} g_{\alpha\overline{\gamma}} + \partial_{\alpha} g_{\beta\overline{\gamma}}\big]Dt^\alpha Dx^\beta
\end{aligned}
\end{equation*}

That implies,
\begin{equation*}
\begin{aligned}
\big[\partial_{\overline{\gamma}} &g_{\alpha\overline{\beta}} - \partial_{\overline{\beta}} g_{\alpha\overline{\gamma}}\big] Dz^\alpha D\overline{z}^\beta - \partial_{\beta} g_{\alpha\overline{\gamma}} Dz^\alpha Dz^\beta \\&= \big[\partial_{\overline{\gamma}} g_{\alpha\overline{\beta}} - \partial_{\overline{\beta}} g_{\alpha\overline{\gamma}}-\partial_{\alpha} g_{\beta\overline{\gamma}}\big]Dx^\alpha Dx^\beta +\big[\partial_{\overline{\gamma}} g_{\alpha\overline{\beta}} - \partial_{\overline{\beta}} g_{\alpha\overline{\gamma}} + \partial_{\alpha} g_{\beta\overline{\gamma}}\big]Dt^\alpha Dt^\beta  \\
&-\big[2\partial_{\overline{\gamma}} g^I_{\alpha\overline{\beta}} + i\partial^x_\beta g_{\alpha\overline{\gamma}} + \partial^t_\alpha g_{\beta\overline{\gamma}}\big] Dt^\alpha Dx^\beta
\end{aligned}
\end{equation*}

Now using this last equation alongside Theorem \ref{thm:1}, Propositions \ref{prop:2} and \ref{prop:3}, we get,
\begin{equation*}
\begin{aligned}
g_{\mu\overline{\gamma}}D^2z^\mu &= \big[\partial_{\overline{\gamma}} g_{\alpha\overline{\beta}} - \partial_{\overline{\beta}} g_{\alpha\overline{\gamma}}\big] Dz^\alpha D\overline{z}^\beta - \partial_{\beta} g_{\alpha\overline{\gamma}} Dz^\alpha Dz^\beta \\ &= (\Phi^{\gamma++}_{\alpha\overline{\beta}} + i\Phi^{\gamma+-}_{\alpha\overline{\beta}} +\frac{i}{2}\partial^t_\gamma g^R_{\alpha\overline{\beta}})Dx^\alpha Dx^\beta + (\frac{1}{2}\partial^x_\gamma g^R_{\alpha\overline{\beta}} - \Phi^{\gamma;--}_{\alpha\overline{\beta}} + i\Phi^{\gamma-+}_{\alpha\overline{\beta}})Dt^\alpha Dt^\beta \\
&-(F^x_{\alpha\gamma\beta} + \partial^t_\alpha g^I_{\beta\overline{\gamma}} - iF^t_{\beta\gamma\alpha} + i\partial^x_\beta g^R_{\alpha\overline{\gamma}})Dt^\alpha Dx^\beta
\end{aligned}
\end{equation*}

But we'd note that,
\begin{equation*}
g_{\mu\overline{\gamma}}D^2z^\mu = \big(g^R_{\mu\overline{\gamma}}D^2x^\mu - g^I_{\mu\overline{\gamma}} D^2t^\mu\big) + i\big(g^R_{\mu\overline{\gamma}}D^2t^\mu + g^I_{\mu\overline{\gamma}}D^2x^\mu\big)
\end{equation*}

Equating real and imaginary parts we get the dual identity,

\begin{equation*}
\begin{aligned}
g^R_{\mu\overline{\gamma}}D^2x^\mu - g^I_{\mu\overline{\gamma}} D^2t^\mu = \Phi^{\gamma++}_{\alpha\overline{\beta}} Dx^\alpha Dx^\beta + (\frac{1}{2}\partial^x_\gamma g^R_{\alpha\overline{\beta}} &- \Phi^{\gamma;--}_{\alpha\overline{\beta}})Dt^\alpha Dt^\beta \\
&-(F^x_{\alpha\gamma\beta} + \partial^t_\alpha g^I_{\beta\overline{\gamma}} )Dt^\alpha Dx^\beta
\end{aligned}
\end{equation*}

and

\begin{equation*}
\begin{aligned}
g^R_{\mu\overline{\eta}}D^2t^\mu + g^I_{\mu\overline{\eta}}D^2x^\mu = (\Phi^{\eta+-}_{\alpha\overline{\beta}} +\frac{1}{2}\partial^t_\eta g^R_{\alpha\overline{\beta}})Dx^\alpha Dx^\beta &+ \Phi^{\eta-+}_{\alpha\overline{\beta}}Dt^\alpha Dt^\beta \\
&-(\partial^x_\beta g^R_{\alpha\overline{\eta}}- F^t_{\beta\eta\alpha})Dt^\alpha Dx^\beta
\end{aligned}
\end{equation*}

Multiplying the second identity by $\epsilon^\eta_\gamma$ and adding the identities, we arrive at the desired result.
\end{proof}
Next we'd investigate some of the specific ramifications and special cases of Theorem \ref{thm:2} through a set of corollaries.

\begin{cor}
Let $\mathscr{M}$ be a 4-dimensional complex manifold with a real symmetric metric $g$ and let $\mathscr{R}$ be a {\it projective real} sub-manifold. Additionally let's assume that $\forall \alpha,\beta$,
\begin{equation*}
\partial^x_1 g_{\alpha\overline{\beta}} = \partial^t_2 g_{\alpha\overline{\beta}} = \partial^t_3 g_{\alpha\overline{\beta}} = \partial^t_4 g_{\alpha\overline{\beta}} = 0
\end{equation*}
Then the projective geodesic is identical to the geodesic on $\mathscr{R}$.
\qed
\end{cor}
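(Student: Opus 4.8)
The plan is to feed the hypotheses into Theorem \ref{thm:2} in two stages: first exploit that $g$ is real symmetric to annihilate every structure built from $g^I$, and only then use the four vanishing-derivative conditions to dispose of the residual terms that still couple to $Dt$. Since $g^I=0$, the Notes following the definitions of the primary and secondary fields give $\Phi^{\gamma+-}_{\alpha\beta}=\Phi^{\gamma--}_{\alpha\beta}=0$ and $F^{x}_{\alpha\gamma\beta}=F^{t}_{\alpha\gamma\beta}=0$, while the definition of the link tensor forces $\epsilon^\eta_\gamma=g^I_{\nu\overline\gamma}g^{R;\nu\overline\eta}=0$. Substituting $\epsilon=0$ into the coefficients of Theorem \ref{thm:2} collapses the left-hand side to $g^R_{\mu\overline\gamma}D^2x^\mu$ and leaves
\begin{equation*}
\Upsilon^{(1,1)}_{\gamma\alpha\beta}=\Phi^{\gamma++}_{\alpha\overline\beta},\qquad
\Upsilon^{(1,0)}_{\gamma\alpha\beta}=\partial^t_\alpha g^R_{\beta\overline\gamma},\qquad
\Upsilon^{(0,0)}_{\gamma\alpha\beta}=-\tfrac12\partial^x_\gamma g^R_{\alpha\overline\beta}.
\end{equation*}

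At this stage the candidate projective geodesic reads $g^R_{\mu\overline\gamma}D^2x^\mu=\Phi^{\gamma++}_{\alpha\overline\beta}Dx^\alpha Dx^\beta-\partial^t_\alpha g^R_{\beta\overline\gamma}Dx^\beta Dt^\alpha+\tfrac12\partial^x_\gamma g^R_{\alpha\overline\beta}Dt^\alpha Dt^\beta$. I would then observe that the intrinsic geodesic of the induced metric $g^R$ on $\mathscr R$ is exactly $g^R_{\mu\overline\gamma}D^2x^\mu=\Phi^{\gamma++}_{\alpha\overline\beta}Dx^\alpha Dx^\beta$, because $\Phi^{\gamma++}$ is precisely the Levi-Civita Christoffel symbol of $g^R$ written with a lowered first index. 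Hence the entire corollary reduces to the claim that the two $Dt$-dependent bilinears vanish along the projected curve.

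Next I would bring in the four conditions. Because $\partial^t_\alpha g^R=0$ for $\alpha\in\{2,3,4\}$, the coefficient $\Upsilon^{(1,0)}_{\gamma\alpha\beta}$ survives only at $\alpha=1$, so the mixed term contracts to $\partial^t_1 g^R_{\beta\overline\gamma}Dx^\beta Dt^1$; because $\partial^x_1 g^R=0$, the coefficient $\Upsilon^{(0,0)}_{\gamma\alpha\beta}$ vanishes for the free index $\gamma=1$, so the $DtDt$ term can only threaten the equations with $\gamma\in\{2,3,4\}$. To eliminate both residues simultaneously I would invoke the companion (imaginary-part) identity derived inside the proof of Theorem \ref{thm:2}, which under $g^I=0$ becomes
\begin{equation*}
g^R_{\mu\overline\eta}D^2t^\mu=\tfrac12\partial^t_\eta g^R_{\alpha\overline\beta}Dx^\alpha Dx^\beta+\Phi^{\eta-+}_{\alpha\overline\beta}Dt^\alpha Dt^\beta-\partial^x_\beta g^R_{\alpha\overline\eta}Dt^\alpha Dx^\beta,
\end{equation*}
and read it as an evolution equation for $Dt^\mu$. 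The conditions $\partial^t_2 g=\partial^t_3 g=\partial^t_4 g=0$ kill the inhomogeneous source for $\eta\in\{2,3,4\}$, so for initial data tangent to $\mathscr R$ (that is $t=0$ and $Dt=0$) one expects the locus $Dt\equiv0$ to be invariant under the flow; on that locus both residual bilinears vanish identically and the projective geodesic coincides with the genuine geodesic of $g^R$ on $\mathscr R$.

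The hard part will be the $\eta=1$ component of this companion equation: when $Dt=0$ its right-hand side reduces to $\tfrac12\partial^t_1 g^R_{\alpha\overline\beta}Dx^\alpha Dx^\beta$, which is not manifestly zero under the stated hypotheses, so the invariance of $\{Dt\equiv0\}$ — and with it the vanishing of the $\gamma\in\{2,3,4\}$ residue $\tfrac12\partial^x_\gamma g^R_{\alpha\overline\beta}Dt^\alpha Dt^\beta$ — is not immediate. I would address this by testing whether the surviving condition $\partial^x_1 g=0$ (equivalently the conservation of the momentum conjugate to $x^1$) together with positive-definiteness of $g^R$ forces this last source to drop, or whether the intended hypotheses must also supply $\partial^t_1 g=0$. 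Establishing that this single remaining source vanishes, so that the complex geodesic does not peel away from $\mathscr R$, is the crux on which the identification of the two geodesics ultimately rests.
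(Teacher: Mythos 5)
Your first stage is sound and coincides exactly with the paper's: with $g^I=0$ the Notes give $\Phi^{\gamma+-}_{\alpha\beta}=\Phi^{\gamma--}_{\alpha\beta}=0$, $F^x=F^t=0$ and $\epsilon^\eta_\gamma=0$, so Theorem \ref{thm:2} collapses to
\begin{equation*}
g^R_{\mu\overline\gamma}D^2x^\mu=\Phi^{\gamma++}_{\alpha\overline\beta}Dx^\alpha Dx^\beta-\partial^t_\alpha g^R_{\beta\overline\gamma}Dx^\beta Dt^\alpha+\tfrac12\,\partial^x_\gamma g^R_{\alpha\overline\beta}\,Dt^\alpha Dt^\beta .
\end{equation*}
The genuine gap is in your second stage, and you have located it yourself: under the stated hypotheses $\partial^t_1 g^R$ need \emph{not} vanish, so the $\eta=1$ component of the companion (imaginary-part) identity sources $D^2t^1$ and the locus $\{Dt\equiv 0\}$ is not invariant under the flow. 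Consequently the two $Dt$-dependent bilinears cannot be argued away, and no refinement along your lines will close this: positive-definiteness of $g^R$ does not force $\partial^t_1 g^R=0$, and adding $\partial^t_1 g=0$ as a hypothesis is not what the corollary intends (it would make the metric independent of all imaginary coordinates and empty the statement of content). The strategy of showing that the complex geodesic remains tangent to $\{t=\mathrm{const}\}$ is therefore a route that fails, not merely an unfinished one.

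The idea you are missing is that the paper never tries to kill the $Dt^1$ terms; it reinterprets them. In the paper's proof the real spacetime is coordinatized as $y=(t^1,x^2,x^3,x^4)$: the imaginary part of $z^1$ is promoted to the time coordinate and $x^1$ is dropped, which is consistent because the hypotheses say precisely that $g$ depends only on $y$. For the components $\gamma>1$ the surviving terms are then recognized as the classical Christoffel symbols of the induced $4$-metric $h$ (with $h_{11}=g_{11}$, $h_{\alpha\beta}=g_{\alpha\beta}$ for $\alpha,\beta>1$, $h_{1\beta}=0$) in these coordinates: one checks $\Gamma_{\gamma,\alpha\beta}=-\Phi^{\gamma++}_{\alpha\beta}$, $\Gamma_{\gamma,1\beta}=\tfrac12\partial^t_1 g_{\beta\gamma}$, $\Gamma_{\gamma,11}=-\tfrac12\partial^x_\gamma g_{11}$, so the mixed term and the $(Dt^1)^2$ term are exactly the time-space and time-time pieces of $D^2y^\mu=-\Gamma^\mu_{ab}Dy^aDy^b$, i.e.\ of the geodesic on $\mathscr R$, rather than obstructions to be removed. (To be fair to you, the paper's own proof is terse: restricting all sums to indices $>1$ tacitly assumes $Dx^1=0$ and $Dt^\alpha=0$ for $\alpha>1$ along the curve, which it does not justify. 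But your reading, which additionally demands $Dt^1=0$, cannot reach the stated conclusion at all, whereas the coordinate reinterpretation is what the corollary means by ``the geodesic on $\mathscr R$''.)
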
 
\begin{proof}
Proof follows by considering $g^I=0$ and setting the specific partial derivatives to 0, in which case Theorem \ref{thm:2} simplifies to 
\begin{equation*}
\begin{aligned}
D^2x^\mu &= g^{\mu\gamma}\Phi^{\gamma++}_{\alpha\beta} Dx^\alpha Dx^\beta - g^{\mu\gamma}\partial_1^t g_{\beta\gamma} Dt^1 Dx^\beta - \frac{1}{2} g^{\mu\gamma}\partial^x_\gamma g_{11}(Dt^1)^2 
\end{aligned}
\end{equation*}
where $\alpha,\beta,\mu > 1$
This can be rewritten as
\begin{equation*}
D^2y^\mu = - \Gamma^\mu_{ab} Dy^a Dy^b
\end{equation*}
where $a,b>0,\mu>1$ and $y=(t^1,x^2,x^3,x^4)$. $\Gamma$ denotes the usual Christoffel symbols and the equation precisely signifies the classical geodesic path.
\end{proof}

We'd refer the quantity $G^\mu= - \Gamma^\mu_{ab} Dy^a Dy^b$ as the {\it Gravitation} field. 
\\
\\
It's interesting to note that Theorem \ref{thm:2} doesn't prescribe a single equation for $D^2x^\mu$. Rather it describes a family of geodesics parametrised by the extrinsic parameters $Dt^\alpha$. One particular choice of such parameters and consequentially, geodesics, would be the Root Geodesics, where $Dt^\alpha = 1, \forall \alpha$

\begin{cor}
Let $\mathscr{M}$ be a 4-dimensional complex manifold with a hermitian metric $g$ with $\epsilon^\eta_\gamma \rightarrow 0$. Also let $\mathscr{R}$ be a {\it projective real} sub-manifold. Additionally let's assume that $\forall \alpha,\beta$,
\begin{equation*}
\partial^x_1 g_{\alpha\overline{\beta}} = \partial^t_2 g_{\alpha\overline{\beta}} = \partial^t_3 g_{\alpha\overline{\beta}} = \partial^t_4 g_{\alpha\overline{\beta}} = 0
\end{equation*}
Then the projective root geodesic is given by,
\begin{equation*}
D^2x^\mu = G^\mu + L^\mu
\end{equation*}
where 
\begin{equation*}
L^\mu = -g^{R;\mu\overline{\gamma}}F^x_{1\gamma\beta}Dx^\beta - g^{R;\mu\overline{\gamma}}\Phi^{\gamma--}_{1\overline{1}}
\end{equation*}
\qed
\end{cor}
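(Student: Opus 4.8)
The plan is to specialise Theorem \ref{thm:2} to the stated configuration and then peel off the gravitational contribution already isolated in the preceding corollary. First I would take the master equation of Theorem \ref{thm:2} and let $\epsilon^\eta_\gamma \to 0$: the left-hand side collapses to $g^R_{\mu\overline\gamma}D^2x^\mu$, and every $\epsilon$-weighted summand disappears from the three symbols, leaving $\Upsilon^{(1,1)}_{\gamma\alpha\beta}=\Phi^{\gamma++}_{\alpha\overline\beta}$, $\Upsilon^{(1,0)}_{\gamma\alpha\beta}=F^x_{\alpha\gamma\beta}+\partial^t_\alpha g^R_{\beta\overline\gamma}$ and $\Upsilon^{(0,0)}_{\gamma\alpha\beta}=\Phi^{\gamma--}_{\alpha\overline\beta}-\tfrac12\partial^x_\gamma g^R_{\alpha\overline\beta}$. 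It is worth noting that this truncation retains the $g^I$-dependence only through $F^x$ and $\Phi^{--}$, while every $\partial^t g$ factor that survives in $\Upsilon^{(1,0)}$ carries the real part $g^R$; this is precisely the clean split that will later route the real-metric terms into $G^\mu$ and the imaginary-metric terms into $L^\mu$.

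Next I would impose the root-geodesic choice $Dt^\alpha=1$, which I use to evaluate every $Dt$-factor to unity (and which is consistent with the $D^2t^\mu=0$ already built into Theorem \ref{thm:2}). I would then feed in the hypotheses $\partial^x_1 g=\partial^t_2 g=\partial^t_3 g=\partial^t_4 g=0$; their combined effect is that the only surviving first derivatives of the metric are $\partial^t_1$ and $\partial^x_2,\partial^x_3,\partial^x_4$.

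The structural heart of the argument is to split the resulting right-hand side into a part assembled purely from $g^R$ and a part carrying $g^I$. The $g^R$-only part is, summand for summand, identical to the expression obtained in the previous corollary under the very same hypotheses and the same root-geodesic evaluation; contracting with the inverse metric $g^{R;\mu\overline\gamma}$ therefore reproduces $-\Gamma^\mu_{ab}Dy^aDy^b=G^\mu$ for $y=(t^1,x^2,x^3,x^4)$ with nothing further to check. The residue consists of the two genuinely new contributions $-F^x_{\alpha\gamma\beta}Dx^\beta Dt^\alpha$ and $-\Phi^{\gamma--}_{\alpha\overline\beta}Dt^\alpha Dt^\beta$, and raising their index with $g^{R;\mu\overline\gamma}$ at $Dt^\alpha=1$ is what must assemble into $L^\mu$.

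I expect the main obstacle to be this final collapse --- showing that the index sums reduce to the bare index-$1$ objects $F^x_{1\gamma\beta}$ and $\Phi^{\gamma--}_{1\overline 1}$ appearing in the statement. Two facts do most of the work. Hermiticity makes $g^I$ antisymmetric, $g^I_{\alpha\overline\beta}=-g^I_{\beta\overline\alpha}$, so in $\sum_{\alpha,\beta}\Phi^{\gamma--}_{\alpha\overline\beta}$ the symmetric double sum of the $\partial^t_\gamma g^I_{\alpha\overline\beta}$ term vanishes, and using $g^I_{1\overline 1}=0$ one finds $\Phi^{\gamma--}_{1\overline 1}=-\partial^t_1 g^I_{1\overline\gamma}$, the time derivative of the potential. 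The derivative conditions then pin every surviving temporal derivative to $\partial^t_1$, fixing the active time index to $1$. The delicate step is the $F^x$ term: its spectator index $\alpha$ is never differentiated, so the vanishing hypotheses alone do not force it to $1$. Here I would have to make explicit the feature of this special configuration that only the components $g^I_{1\overline\cdot}$ are dynamically relevant, so that $F^x_{1\gamma\beta}=\partial^x_\gamma g^I_{1\overline\beta}-\partial^x_\beta g^I_{1\overline\gamma}$ is exactly the field strength entering the geodesic. This is where $L^\mu$ acquires its electromagnetic reading, with $g^I_{1\overline\beta}$ playing the role of a vector potential and $L^\mu$ that of the associated Lorentz force, and it is where the index bookkeeping must be carried out with the greatest care.
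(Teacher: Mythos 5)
Your overall strategy is the same as the paper's: specialize Theorem \ref{thm:2}, drop the $\epsilon$-weighted terms, impose the root-geodesic choice and the derivative hypotheses, and split the right-hand side into a $g^R$-part (contracting to $G^\mu$) and a $g^I$-part (which should become $L^\mu$). The problem is the step you yourself flag as delicate: it is not a deferrable bookkeeping subtlety, it is the whole content of the corollary, and under your reading of ``root geodesic'' ($Dt^\alpha=1$ for \emph{all} $\alpha$, as the paper's definition indeed states) it is actually false. With all $Dt^\alpha=1$ the $\Upsilon^{(1,0)}$ term contributes $\sum_\alpha F^x_{\alpha\gamma\beta}Dx^\beta$, and the hypotheses $\partial^x_1 g=\partial^t_2 g=\partial^t_3 g=\partial^t_4 g=0$ say nothing about the undifferentiated first index $\alpha$, so $F^x_{2\gamma\beta}$, $F^x_{3\gamma\beta}$, $F^x_{4\gamma\beta}$ survive and nothing reduces the sum to $F^x_{1\gamma\beta}$. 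Moreover the same defect afflicts the $\Upsilon^{(0,0)}$ term, which you claim does work out: after using antisymmetry of $g^I$ and the hypotheses, $\sum_{\alpha,\beta}\Phi^{\gamma--}_{\alpha\beta}=-\partial^t_1\sum_\beta g^I_{\beta\overline{\gamma}}$, whereas $\Phi^{\gamma--}_{1\overline{1}}=-\partial^t_1 g^I_{1\overline{\gamma}}$; the extra terms $-\partial^t_1 g^I_{\beta\overline{\gamma}}$ with $\beta>1$ are not killed by anything. Your phrase ``pins every surviving temporal derivative to $\partial^t_1$'' fixes the derivative index but not the free metric index, which is exactly the trap you correctly identified for $F^x$ but then overlooked here.

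The paper closes this gap by not actually using $Dt^\alpha=1$ for all $\alpha$: its proof reads ``allowing $Dt^1=1$,'' i.e.\ it takes $Dt^\alpha=\delta^\alpha_1$, with the remaining imaginary velocities vanishing, under which every $Dt$-contraction trivially picks out the index-$1$ components, $\Upsilon^{(1,0)}_{\gamma\alpha\beta}Dx^\beta Dt^\alpha=(F^x_{1\gamma\beta}+\partial^t_1 g^R_{\beta\overline{\gamma}})Dx^\beta$ and $\Upsilon^{(0,0)}_{\gamma\alpha\beta}Dt^\alpha Dt^\beta=\Phi^{\gamma--}_{1\overline{1}}-\tfrac{1}{2}\partial^x_\gamma g^R_{1\overline{1}}$, and the corollary follows at once. (This is in tension with the paper's own definition of root geodesic, but it is the only reading under which the computation is valid.) To repair your proof you must either adopt this reading $Dt^\alpha=\delta^\alpha_1$, or add an explicit structural hypothesis such as $g^I_{\alpha\overline{\beta}}=0$ whenever $\alpha,\beta\neq 1$ --- which is the unstated ``feature of this special configuration'' you appeal to. As written, the proposal leaves the decisive step unproven.
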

\begin{proof}
Proof follows by admitting the limit $\epsilon^\eta_\gamma \rightarrow 0$ and allowing $Dt^1=1$ in Theorem \ref{thm:2}
\begin{equation*}
\begin{aligned}
g^R_{\mu\overline{\gamma}}D^2x^\mu &= \Phi^{\gamma++}_{\alpha\beta} Dx^\alpha Dx^\beta - \partial_1^t g_{\beta\gamma} Dx^\beta - \frac{1}{2} g^{\mu\gamma}\partial^x_\gamma g_{11}\\
&- F^x_{1\gamma\beta} Dx^\beta - \Phi^{\gamma--}_{1\overline{1}}\\
&= g^R_{\mu\overline{\gamma}} G^\gamma - F^x_{1\gamma\beta} Dx^\beta - \Phi^{\gamma--}_{1\overline{1}}
\end{aligned}
\end{equation*}
\end{proof}
$L^\mu$ would be referred as the Lorentz field. Indeed when $g^R_{\mu\overline{\gamma}}=\delta_{\mu\gamma}$ is Euclidean, we have a familiar form of a Lorenz field,
\begin{equation*}
L^\mu = -F^x_{1\gamma\beta}Dx^\beta - \Phi^{\gamma--}_{1\overline{1}}
\end{equation*}
where $F^x_{1\gamma\beta}$ represents anti-symmetric Magnetic tensor and $\Phi^{\gamma--}_{1\overline{1}}$ represents Electric field. In fact, since,
\begin{equation*}
F^{x}_{1\gamma\beta} = \partial^x_\gamma g^I_{1\overline{\beta}} - \partial^x_\beta g^I_{1\overline{\gamma}}
\end{equation*}
naturally we can characterise $A_\gamma = g^I_{1\overline{\gamma}}$ as the magnetic potential. If we allow $\phi$ such that $\partial^x_\gamma \phi = \Phi^{\gamma--}_{11}$ then we can write $(\phi, A_2, A_3, A_4)$ as the 4-potential for the field. 

In the most general case when $\epsilon^\eta_\gamma$ isn't insignificant but still $\|\epsilon\| < 1$, we can rewrite Theorem \ref{thm:2} for $D^2x^\mu$ as an infinite sum of perturbations involving diminishing powers of $\epsilon$.

\section{Discussion and Conclusion}
In the preceding section we have been able to derive the most generic form for the {\it Projective real} geodesics. While Theorem \ref{thm:1} describes the said geodesic over complex and conjugate coordinates, in Theorem \ref{thm:2} we've been able to arrive at a much more useful expression for the geodesic in terms of real and imaginary coordinates. Through the results, we have seen that in general the derived projective geodesic deviates from the true geodesic on the real coordinates, and indeed contains additional contributions to the `Force field'. Careful examination of these additional terms reveal that they vanish when the underlying metric is real symmetric and the manifold itself can be locally mapped to the Minkowski spacetime. For these {\it Gravitational systems} it is shown that the Projective geodesic is identical to the true geodesic prescribed by General Relativity. 
\\
\\
However, in presence of an anti-symmetric imaginary component of the metric, we have seen that Theorem \ref{thm:2} naturally yields additional terms. When the imaginary component of the metric is relatively small in magnitude compared to the real part, these additional terms are shown to manifest as familiar Lorentz field with 4-potential being a function of the metric itself. This is a remarkable result because unlike the existing relativistic approaches to Electromagnetism, it doesn't merely show that Lorenz Field is compatible with GR, but rather it is a manifestation of the spacetime itself, as is gravity. 
\\
\\
For the case when both real and imaginary parts of the metric are of comparable scale, the Projective geodesic incrementally covers an infinite series of perturbative corrections involving diminishing contributions from the higher powers of the link tensor. This general case and perturbative terms would need to be investigated in more details within a separate follow up study.

\bibliography{Real-Projective-Geodesics.bib}
\bibliographystyle{unsrt}
 \end{document}